\begin{document}
\bibliographystyle{plain}
\title{The behaviour of the DC current density  at the edge  of electrodes}
\author{Spyros Alexakis\thanks{
alexakis@math.toronto.edu. }}
\date{} \maketitle

\def\a{{\alpha}}
\newcommand{\e}{\epsilon}
\def\b{{\beta}}
\def\be{{\beta}}
\def\ga{\gamma}
\def\Ga{\Gamma}
\def\de{\delta}
\def\De{\Delta}
\def\ep{\epsilon}
\def\eps{\epsilon}
\def\ka{\kappa}
\def\la{\lambda}
\def\La{\Lambda}
\def\si{\sigma}
\def\Si{\Sigma}
\def\om{\omega}
\def\Om{\Omega}
\def\th{\theta}
\def\ze{\zeta}
\def\ka{\kappa}
\def\nab{\partial}
\def\varep{\varepsilon}
\def\vphi{\varphi}
\def\pr{{\partial}}
\def\al{\alpha}
\def\les{\lesssim}
\def\c{\cdot}
\def\rh{{\rho}}

\def\AA{{\mathcal A}}
\def\Aa{{\mathcal A}}
\def\BB{{\mathcal B}}
\def\Bb{{\mathcal B}}
\def\CC{{\mathcal C}}
\def\MM{{\mathcal M}}
\def\NN{{\mathcal N}}
\def\II{{I}}

\def\FF{{\mathcal F}}
\def\EE{{\mathcal E}}
\def\HH{{\mathcal H}}
\def\LL{{\mathcal L}}
\def\GG{{\mathcal G}}
\def\TT{{\mathcal T}}
\def\WW{{\mathcal W}}

\def\OO{{\mathcal O}}
\def\SS{{\mathcal S}}
\def\NN{{\mathcal N}}
\def\Ss{{\mathcal S}}
\def\UU{{\mathcal U}}
\def\JJ{{\mathcal J}}
\def\KK{{\mathcal K}}
\def\Lie{{\mathcal L}}

\def\DD{{\mathcal D}}
\def\PP{{\mathcal P}}
\def\RR{{\mathcal R}}
\def\QQ{{\mathcal Q}}
\def\ZZ{{\mathcal Z}}
\def\VV{{\mathcal V}}

\def\HHb{\underline{\mathcal H}}
\def\Lie{{\mathcal L}}

\def\A{{\bf A}}
\def\B{{\bf B}}
\def\D{{\bf D}}
\def\F{{\bf F}}
\def\H{{\bf H}}
\def\I{{\bf I}}
\def\J{{\bf J}}
\def\M{{\bf M}}
\def\N{{\bf N}}
\def\L{{\bf L}}
\def\O{{\bf O}}
\def\Q{{\bf Q}}
\def\Z{{\bf Z}}
\def\R{{\bf R}}
\def\P{{\bf P}}
\def\U{{\bf U}}
\def\V{{\bf V}}
\def\S{{\bf S}}
\def\K{{\bf K}}
\def\T{{\bf T}}
\def\E{{\bf E}}
\def\X{{\bf X}}
\def\g{{\bf g}}
\def\m{{\bf m}}
\def\t{{\bf t}}
\def\u{{\bf u}}
\def\p{{\bf p}}
\def\SSS{{\mathbb S}}
\def\RRR{{\mathbb R}}
\def\MMM{{\mathbb M}}
\def\CCC{{\mathbb C}}
\def\f12{{\frac 1 2}}
\def\ub{\underline{u}}

\def\dual{{\,\,^*}}
\def\div{{\mbox div\,}}
\def\curl{{\mbox curl\,}}
\def\lot{\mbox{ l.o.t.}}
\def\mb{{\,\underline{m}}}
\def\lb{{\,\underline{l}}}
\def\Hb{\,\underline{H}}
\def\Lb{{\,\underline{L}}}
\def\Yb{\,\underline{Y}}
\def\Zb{\,\underline{Z}}
\def\Omb{\underline{\Om}}
\def\Deb{\underline{\De}\,}
\def\deb{{\,\underline{\de}\,}}
\def\Kb{{\,\underline K}}
\def\NNb{\underline{\NN}}

\def\Xh{\,^{(h)}X}
\def\Yh{\,^{(h)}Y}
\def\trch{{\mbox tr}\, \chi}
\def\trchb{{\mbox tr}\, \chib}
\def\chih{{\hat \chi}}
\def\chib{{\underline \chi}}
\def\chibh{{\underline{\chih}}}
\def\phib{{\underline{\phi}}}
\def\etab{{\underline \eta}}
\def\omb{{\underline{\om}}}
\def\bb{{\underline{\b}}}
\def\aa{{\underline{\a}}}

\def\ul{{\underline{l}}}
\def\xib{{\underline{\xi}}}
\def\th{\theta}
\def\thb{{\underline{\theta}}}
\def\va{\vartheta}
\def\vab{{\underline{\vartheta}}}
\def\Omm{\overline{m}}
\def\Mm{m}
\def\Ll{\underline{l}}
\def\Kk{l}
\def\rhod{{\,\dual\rho}}
\def\f{\widetilde{f}}
\def\W{\widetilde{W}}
\def\G{\widetilde{G}}

\def\um{\underline{u}}
\def\Lp{L}
\def\Lm{\underline{L}}
\abstract{We study the complete electrode model boundary condition for 
second order elliptic PDE. A specific case of this is the PDE describing 
the electrostatic potential for a conductive body into which current is 
injected through electrodes that touch the boundary. We obtain the optimal 
description of the gradient of the electrostatic potential upon approach to the edge of the electrodes.}

\newtheorem{proposition}{Proposition}
\newtheorem{theorem}{Theorem}
\newcommand{\Sum}{\sum}
\newtheorem{lemma}{Lemma}
\newtheorem{observation}{Observation}
\newtheorem{formulation}{Formulation}
\newtheorem{definition}{Definition}
\newtheorem{conjecture}{Conjecture}
\newtheorem{corollary}{Corollary}
\section*{Introduction}
 We consider the problem of understanding the asymptotics of 
solutions to second order elliptic PDE subject to the \emph{Complete Electrode Model} boundary conditions. 

The PDEs we consider are over a simply connected domain 
  $\Omega\subset \mathbb{R}^n$, where $\Omega$ has a 
  ${\cal C}^{\lceil \frac{n+2}{2}\rceil}$ boundary, and are defined as solutions to 
\begin{equation}\label{the PDE}
{\cal L} u(x)=\rho(x)
\end{equation}
 where $\cal L$ is a strictly elliptic second order partial differential operator with ${\cal C}^{\lceil \frac{n+2}{2}\rceil }$ coefficients; the principal symbol will be  a
     Laplace-Beltrami operator associated to a ${\cal C}^{\lceil \frac{n+2}{2}\rceil }$ Riemannian metric $g$.

     So $\cal L$ will be of the form: 
     
     \begin{equation}\label{operator}
    {\cal  L}=\frac{1}{\sqrt{|g|}}\sum_{i,j=1}^n \partial_i 
    (\sqrt{|g|} g^{ij}(x)\cdot \partial_j)+\sum_{i=1}^n a^i(x)\cdot
     \partial_i +V(x)
     \end{equation}
Here, letting $d= \lceil \frac{n+2}{2}\rceil $,  $g^{ij}(x)$ (for all $x\in\Omega$)  is a symmetric $n\times n$ matrix which is positive 
definite and  with entries that are 
${\cal C}^{d+1 }$ smooth,  $a^i(x)$ is a 
${\cal C}^{{{d}}}$  vector field and $V(x)$ is a ${\cal C}^{{{d}}}$ scalar.  $\rho(x)$ 
is a ${\cal C}^{{{d}}}$ source term. $g_{ij}(x)$ (the inverse of $g^{ij}(x)$) 
defines  a Riemannian metric and $|g|$ is short-hand notation for 
${\rm det}(g_{ij}(x))$ 

The most classical example of such operators would be the standard 
Laplacian, where $g_{ij}=\delta_{ij}$ and $a^i(x)=V(x)=0$. 
In that setting, $u(x)$ would stand for the electrostatic potential 
induced from the forcing term 
$\rho(x)$ (which models a  charge density  inside $\Omega$),  as well as 
the current that is induced on the boundary, which we review momentarily. 
The description of $u$ as an electrostatic potential is also valid 
 when $a_i=V=0$; in that case $u$ is the potential induced on a material 
 of (an-isotropic) conductivity 
 $\sigma^{ij}(x)= \sqrt{|g|}\cdot g^{ij}(x)$. The vector 
 \[
 \sigma^{ij}(x)\cdot \partial_j u
 \]
 stands for the \emph{current density}. The main object of this paper is 
 to understand this current density when the boundary condition is given 
 by the \emph{complete electrode model}: In dimensions $n=2,3$
this captures 
the real-world setting
of a finite number of electrodes touching the conductive body $\Omega$. 
Since our argument carries over verbatim to the presence of first and 
zeroth order terms in the operator $\cal L$,  \eqref{operator}, we present the proof 
in this more general setting.
  
The electrodes correspond to a finite number of relatively open subsets  
$E_i\subset\partial\Omega$, 
        with smooth boundaries (in $\partial \Omega$).  Let $\nu$ be the 
        unit 
        normal vector field to $\partial\Omega$ (pointing into
         $\Omega$). \footnote{``Unit'' will be with respect to the 
         Riemannian metric $g$ (in 
        the special case where ${\cal L}=\Delta$, $\nu$ will be the unit 
        normal with respect to the Euclidean metric).  }

The CEM boundary condition  is then: 

\begin{equation}\label{CEM bdry}
\int_{E_i}\partial_\nu udV_{g|_{\partial M}}=J_i , u|_{E_i}={U}_i, 
\text{ }\partial_\nu u=0\text{ } {\rm on}\text{ } \partial \Omega\setminus 
(\bigcup_{i=1}^N E_i).
\end{equation}
Here the numbers $J_i$ are \emph{prescribed}, subject to the constraint 
$\sum_{i=1}^NJ_i=0$.   They encode the pattern of \emph{DC current} that we inject 
through the electrodes.   The constants
 $U_i$ are \emph{not} 
prescribed--they represent the restriction that the voltage on each of the 
electrodes  should be constant, 
but one does not get to prescribe the constant. 
The solution $u$ is only defined up to an additive 
constant.  To restore uniqueness,  one can 
 impose a ``grounding'' normalization,  e.g. $\int_\Omega udV_g=0$,  although this is not 
 necessary for what we do here. 

We remark that one often considers a variant of the conditions above where the condition  $u|_{E_i}={U}_i$ is replaced by a condition 
$u+z_i\cdot \sigma \partial_\nu u|_{E_i}=U_i$, where $U_i$ is again an (un-prescribed) constant; the coefficients $z_i$ 
are fixed by the problem and denote the \emph{contact} resistances. 
In our work here we treat the case of all
 contact resistances being zero. \footnote{A variant of the boundary conditions would be that 
we \emph{define} the constants ${U}_i$ and then the numbers $J_i$ are forced--this should 
be seen as maintaining a constant potential on the electrodes and then inducing DC
current through the electrodes as a result.}

The existence and uniqueness for \eqref{the PDE} subject to \eqref{CEM bdry}
when $a^i(x)=V(x)=\rho(x)=0$ 
has been established for $u\in H^1(\Omega)$ in \cite{SomChenIsaac}.
That argument generalizes readily in the presence of a right hand side  
$\rho\in L^2(\Omega)$.   For general lower-order terms $a^i(x)$ and $V(x)$ existence 
and uniqueness may fail, so they need to be imposed as an apriori 
assumption.  Our goal in this paper is to understand the regularity of $u$ (and $\nabla u$) on $\overline{\Omega}$:

The smoothness of the solution $u$  in the interior ${\rm Int}(\Omega)$ 
is very classical. The (local)  boundary regularity of $u$ upon approach 
to either points in the interior of an electrode, ${\rm Int}(E_i)$ or a 
point on the exterior of all electrodes,
$\partial\Omega\setminus(\bigcup_{i=1}^N\overline{E_i})$, is also 
classical,  since one 
has either constant Dirichlet or zero Neumann 
conditions around any such point--in the particular the solution extends 
in a 
${\cal C}^d$-fashion to those portions of the boundary, given the regularity assumptions on 
$\partial\Omega, g, a^i, V, \rho$ here.    

Our goal in this paper is to  to  understand the asymptotic behaviour  of 
$\nabla u$ upon approach to the \emph{edge} 
$\partial E_i$ of each electrode.  This is motivated by the real-world 
appearances of this 
setting.  One is in electrical impedance tomography, see for example
\cite{Darbasetal, HAKULA, Hyvonen, Vauhkonenetal}.  
But even for fully
 passive measurements (where $J_i=0$, $\forall i=1,\dots, N$), 
such as those  performed in an electro-cardiogram,  one would like to know 
the behaviour
 of the  current density at the edge of each electrode.  Another 
 motivation to 
 pursue this question is the desire to develop numerical solvers for the 
 equation \eqref{the PDE} subject to \eqref{CEM bdry}.
In fact the difficulty of obtaining accuracy 
  for both finite element and boundary element  
  numerical solvers due to the singularity of the solution precisely at the edge of the electrodes
   has been noted in \cite{DelKress}.  We expect that 
the apriori knowledge of the asymptotic behaviour of the solution near 
 the edges $\partial E_i$ can be used to increase the accuracy 
 of  numerical solvers.   
 \medskip

 Beyond EIT and the CEM boundary condition, we should mention that the problem  considered here can be seen 
 as a Zaremba problem,  with mixed Dirichlet and Neumann boundary conditions.  It thus fits into a 
 classical circle of questions going back to \cite{Zaremba}.  
 Such mixed boundary value problems 
 describe phenomena beyond CEM electrostatics; in particular  in elasticity of partially fastened bodies
\cite{AlChMa, Eskin} 
  and more (\cite{AlChMa}).  A prototypical mixed-boundary value problem seeks to solve an elliptic PDE inside a domain 
  where the boundary condition on parts of the domain is Dirichlet and on the remaining part of 
  the boundary  it is Neumann.  (Note that for CEM
  it  is vanishing Neumann and piecewise constant Dirichlet).

 To put this in context,  let us discuss some prior result on boundary regularity in mixed-boundary problems 
 that are similar to ours: In two dimensions, and only for constant coefficient elliptic operators with no
 lower-order terms (say the Laplacian for simplicity) we mention two results which are related 
 to the simpler first result we derive: 
A result which is almost identical to our simpler problem in two dimensions is obtained in section 13 in \cite{Eskin}, 
combining example 13.1 with part 6 of that section --however the proof is not elementary as the 2-dimensional proof here.  
Still in 2 dimensions,  
Grisvard \cite{Grisvard} studied harmonic functions $u$ over polygons (in $\mathbb{R}^2$) 
where at given corners one is allowed to have an alteration of 
vanishing Dirichlet and vanishing Neumann conditions.
He showed the existence of a power $p$ so that (roughly speaking) $|\nabla u |\sim r^{-p}$ upon approach to such 
a juncture between vanishing Neumann and vanishing Dirichlet boundary conditions; however the power $p$ is not found. 

On more general variable-coefficient PDEs the results in the literature do not provide the strong control we
 obtain at the juncture between the Neumann and Dirichlet condition that we obtain here. 
A theory which is also applicable to variable coefficients (and  higher dimensions) is developed in 
\cite{RempSchulzePaper}, as well as section 24 in \cite{Eskin}.  The regularity  one obtains is not strong enough
to obtain leading order asymptotics at points on the juncture between the two types of boundary conditions. 

 While the machinery in those papers 
is taylored for more general questions  than what is developed here, 
if one were to invoke those results to the case at hand, the precise power 
of blow-up of $|\nabla u|$ would not be determined by those methods. 
  The type of 
analysis in \cite{Eskin, Grisvard, RempSchulzePaper} is most frequently applied 
in the setting of boundary value problems on manifolds with singularities
 (such as corners, point singularities, more general singularities).   
 A few references to such problems are the pioneering \cite{Kondratev} 
 and 
 \cite{KozlovMazyaRossmann}. 
We note that mixed boundary problems for elliptic equations  are also of interest for much less regular regions where the Neumann and Dirichlet data 
live; for instance for zero Neumann and Dirichlet data one can obtain estimates for $|\nabla u|$ in $L^{2+\delta}$ spaces--see the recent paper 
\cite{AlChMa} and references therein. 
 \medskip

We will first state and prove our result in the simpler setting of the Poisson equation in 
dimension $2$.  In that case the proof is more concise and elegant, and entirely reduces to complex analysis. 
After that we will state and prove the claim for general dimensions and the more general operators  $\cal L$
as  in \eqref{operator}.

\subsection*{Edge of the electrode asymptotics for the Poisson equation in two dimensions}  
To state our result for $\Omega\subset \mathbb{R}^2$ with ${\cal L}=\Delta$,   consider  an arc-length parameter $s$ of  the boundary $\partial\Omega$ 
  which allows us to think of it as an interval
  $[0,L]$  of length $L$ (with the endpoints identified). 
  
  The CEM boundary condition considers $N$ ``electrodes'' through which DC 
  current is injected onto the boundary. 
  The electrodes correspond to $N$  disjoint closed sub-intervals on the 
  boundary $\partial\Omega$, which (only for this section) we denote by  
  $e_1,\dots, e_N\subset \partial\Omega$  
  
   Recall $\nu$ be the unit normal vector field to $\partial\Omega$ (with 
   respect to the Euclidean metric now).  
   Then the CEM boundary condition is that on each 
   $e_i$: 
   
   \[
   \int_{e_i}\partial_\nu uds=J_i,  u(s)=V_i,
   \]
     where $J_i, V_i\in\mathbb{R}$. In addition  $\partial_\nu u(s)=0$ 
     for all $s\in \partial\Omega\setminus \bigcup_{i=1}^Ne_i$.  As 
     already explained, 
the \emph{choice} of data are the $N$ numbers $J_i$ subject to     the 
restriction $\sum_{i=1}^N J_i=0$. 
The constants $V_i$ are \emph{not} prescribed  as free data; they form 
part of the condition that the electrostatic 
potential should be constant  on each electrode.
To ensure uniqueness, we can impose $\sum_{i=1}^NV_i=0$.

 Our aim here is to derive 
the optimal asymptotic behaviour for $u$ and $\partial_\nu u$ at the 
endpoints of  the electrodes $e_i$. 

\begin{theorem}\label{Them 2D}
Consider the PDE \eqref{the PDE}  when $n=2$ and ${\cal L}=\Delta$ (the 
Euclidean Laplacian), and $\rho(x)$ is 
compactly supported away from $\partial\Omega$. 

Consider any electrode $e_i$ which corresponds to an interval 
$[s_i, s_{i+1}]\subset [0,L]$.  
Then consider the function $\partial_\nu u$ on $(s_i,  s_{i+1})$. 
This function 
satisfies the following asymptotic expansion: 
As $s\to s_i^+$ and $s\to s_{i+1}^-$ (i.~e.~approaching $\partial e_i$ 
from ${\rm int}(e_i)$):
\[
\partial_\nu u(s)\sim |s-s_i|^{-1/2}, \partial_\nu u (s)
\sim |s-s_{i+1}|^{-1/2}
\]
while for points $s\notin e_i$ as $s\to s_i^-$ or $s\to s^+_{i+1}$
 (i.~e.~approaching $\partial e_i$ from ${\rm Ext}(E_i)$):
\[
\partial_s u(s)\sim |s-s_i|^{-1/2},  \partial_s u(s)\sim |s-s_{i+1}|^{-1/2}.
\]
\end{theorem}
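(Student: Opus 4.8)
The plan is to reduce the problem entirely to complex analysis by exploiting that $n=2$ and $\mathcal L = \Delta$. Since $\rho$ is compactly supported away from $\partial\Omega$, we may subtract off a particular solution of $\Delta v = \rho$ that is smooth in a neighbourhood of $\partial\Omega$; hence near each edge $s_i$ the function $u$ is harmonic, and it suffices to analyze a harmonic function on a half-disk-type neighbourhood of a boundary point where the boundary condition switches from $u = \text{const}$ (on $e_i$) to $\partial_\nu u = 0$ (on the complement). First I would conformally straighten: a conformal map sends a neighbourhood of $s_i$ in $\overline\Omega$ to a neighbourhood of $0$ in the upper half-plane $\{\Im z \ge 0\}$, carrying the arc $e_i$ to, say, the negative real axis and its complement to the positive real axis, and preserving both the Laplace equation and the (homogeneous) Neumann and constant-Dirichlet conditions. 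After subtracting the constant $V_i$, the model problem is: $u$ harmonic on $\{\Im z > 0\}$ near $0$, with $u = 0$ on $(-\e,0)$ and $\partial_y u = 0$ on $(0,\e)$.

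The key step is to solve this model problem explicitly. Let $f = u_x - i u_y$ be the holomorphic derivative $2\,\partial u/\partial z$ (so $f$ is holomorphic where $u$ is harmonic). On $(-\e,0)$, $u$ constant forces $u_x = 0$, i.e. $\Re f = 0$; on $(0,\e)$, $\partial_y u = 0$ forces $\Im f = 0$. This is a mixed Riemann–Hilbert condition for $f$ on the two rays. The substitution $\zeta = \sqrt{z}$ (principal branch, mapping the slit upper half-plane to a quarter-plane/sector) converts these two conditions, after tracking how the boundary rays and the factor $d\zeta/dz$ transform, into the statement that $f(z)\sqrt{z}$ — or more precisely the appropriate combination — extends as a holomorphic function across the full real segment $(-\e,\e)$ by Schwarz reflection, being real there. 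Consequently $f(z) = z^{-1/2}\bigl(c_0 + c_1 z + \dots\bigr)$ with $c_0 \neq 0$ generically, which is exactly a convergent Laurent-type expansion in powers of $z^{1/2}$. Translating back through the conformal map (whose derivative is nonvanishing and smooth at $s_i$ since $\partial\Omega$ is $\mathcal C^{\lceil (n+2)/2\rceil}$), and noting that $\partial_\nu u$ and $\partial_s u$ are the real and imaginary parts of $f$ against the unit normal/tangent, one reads off $\partial_\nu u \sim |s - s_i|^{-1/2}$ from the interior-of-$e_i$ side and $\partial_s u \sim |s - s_i|^{-1/2}$ from the exterior side; the same argument at $s_{i+1}$ with the roles of the two rays swapped gives the remaining two asymptotics.

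The main obstacle I anticipate is not the local model computation but two global/bookkeeping points. First, one must justify that the coefficient $c_0$ of the leading $z^{-1/2}$ term is genuinely nonzero — otherwise $\partial_\nu u$ would be bounded and the claimed asymptotic ``$\sim$'' would be false. This requires a nonvanishing/nondegeneracy argument: if $c_0 = 0$ at \emph{every} edge then $u$ would be $\mathcal C^1$ up to $\partial\Omega$ with mixed Neumann–Dirichlet data, and one would need to show that this forces $u$ constant (hence all $J_i = 0$), contradicting the data — so the statement implicitly presumes the current pattern is nontrivial, or $c_0$ must be allowed to vanish for special data. I would state the asymptotic with a constant that may in principle be zero, or add the hypothesis that the $J_i$ are not all zero and argue genericity of nonvanishing. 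Second, one must verify that the conformal map and the subtraction of the smooth particular solution do not destroy the form of the expansion — this is routine since multiplying $z^{-1/2}(c_0 + \dots)$ by a nonvanishing holomorphic function and composing with a conformal change of variable with nonzero derivative preserves the class of functions of the form $z^{-1/2}\times(\text{holomorphic})$, but it needs to be written out carefully given the finite (rather than $\mathcal C^\infty$) regularity of $\partial\Omega$.
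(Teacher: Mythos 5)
Your approach is essentially the paper's: Riemann map to the upper half-plane, pass to the holomorphic derivative $f$, substitute $\zeta=\sqrt{z}$, and Schwarz-reflect. But there is a real gap at the step where you claim that ``$f(z)\sqrt{z}$ \dots extends as a holomorphic function across the full real segment $(-\epsilon,\epsilon)$ by Schwarz reflection.'' Schwarz reflection extends a holomorphic function across an open boundary \emph{segment} on which it is continuous; it does not extend through the corner point $\zeta=0$ where the two boundary conditions meet. After the two reflections you only get a holomorphic function on the \emph{punctured} disc $D(0,\sqrt\delta)\setminus\{0\}$, and nothing in the reflection argument bounds the order of the isolated singularity at the origin. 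Concretely, $F(\zeta)=\zeta^{-3}$ is holomorphic on the punctured disc, has $\mathrm{Im}\,F=0$ on the real axis and $\mathrm{Re}\,F=0$ on the imaginary axis, and would correspond to $f(z)=z^{-3/2}$, i.e.\ to $u\sim z^{-1/2}$, a different and stronger singularity. To exclude this one must use the a priori information that $u\in H^1$ near the boundary: as in the paper, $\|\nabla u\|_{L^2}<\infty$ is conformally invariant, so $\|f\|_{L^2(D_+(0,\delta))}<\infty$, which after the change of variables $z=\zeta^2$ becomes $\|F\cdot|\zeta|\|_{L^2}<\infty$; then and only then does the Riemann removable singularity theorem give that $\zeta\,\tilde F(\zeta)$ is holomorphic across the origin, i.e.\ $\tilde F$ has at most a simple pole. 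This $L^2$/energy input is entirely absent from your argument, and without it the claimed Laurent form $f=z^{-1/2}(c_0+c_1 z+\cdots)$ is unjustified.

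Two smaller remarks. First, your concern about whether $c_0$ (the paper's $A$) can vanish is legitimate, but it is a criticism of the theorem's ``$\sim$'' notation rather than of the proof: the paper's argument, like yours, only produces $\partial_\nu u = \alpha\,|s-s_i|^{-1/2} + O(1)$ with $\alpha$ possibly zero, and makes no nondegeneracy claim. Second, the preliminary subtraction of a particular solution of $\Delta v=\rho$ is unnecessary: since $\rho$ is compactly supported away from $\partial\Omega$, $u$ is already harmonic in a collar neighbourhood of the boundary.
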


\begin{proof}
 We prove our result for the left  endpoint $A$ of $e_i$. The proof for the 
 right endpoint
follows by an obvious modification. 
We first invoke the Riemann mapping theorem,  to 
map $\Omega$ to the upper-half 
 plane $\mathbb{R}^2_+$, sending a point 
 $P\in \partial\Omega\setminus\bigcup_{i=1}^Ne_i$ to infinity.
  (Denote the 
 Riemann mapping by $\cal R$ for future reference). 
We recall (see \cite{Pommerenke} for instance) that the Riemann
 map extends to $\partial\Omega$ in a ${\cal C}^1$ way so 
$M^{-1}\le |{\cal R}^\prime(z)|\le M$ in a neighborhood of $A$  for some 
$M>1$. 
 
   The 
 electrodes 
are then mapped to closed subintervals 
$\tilde{e}_i\subset \mathbb{R}=\partial\mathbb{R}^2_+$. By abuse of 
notation, 
we denote them by $e_i$ again.  By translation, we can assume that 
 $e_i$ is of the form 
$[0, l_i]\subset\mathbb{R}$.  The function $u$ composed with the Riemann mapping, $u\circ {\cal R}$
 is again harmonic in 
$\mathbb{R}_+^2$ and we denote it by $u$ again, by abuse of notation. 
Using the coordinates $(x,y)$ in $\mathbb{R}^2_+$ we see that $\partial_x u(x,0)=0$ for $x\in (0, l_i)$ 
and $\partial_y u(x,0)=0$ for $x<0$ and small enough in magnitude.  We note also that $\nabla u\in L^2(D_+(0, l_i/2))$, where 
$D_+(0, l_i/2)$ is the half-disc centered at the origin of radius $l_i/2$.  This follows from the invariance of 
the $\dot{H}^1$ norm under the Riemann mapping.

The proof will use complex analysis. Consider the complex-valued function 

\[
f(x,y)=\partial_x u+i\cdot \partial_y u. 
\]
As is well-known, this function is holomorphic for $y>0$, 
since the harmonicity of $u$ implies $f(x,y)$  satisfies the Cauchy-Riemann equations.  We thus write $f(z), z=x+iy$.  Note that the 
${\cal C}^1$ 
smoothness of $u$ away from the endpoint of the electrode  implies that 
$f$ extends continuously to the $x$-axis and the $y$-axis,  \emph{away} from the origin. 
We note that 

\begin{equation}
\label{H1 to L2}
\|f\|_{L^2(D_+(0,l_i/2))}= \|u\|_{\dot{H}^1(D_+(0,l_i/2))}<\infty. 
\end{equation}

Choose $\delta$ small enough so that $[0,\delta]\subset e_1$ and 
$[-\delta, 0)\bigcap (\bigcup_{i=1}^N{e_i})=\emptyset$. 

We wish to study the behaviour of $u$ in $D_+(0,\delta)$ as one approaches the origin. 
Let us use polar coordinates $(r,\theta)$ on $D_+(0,\delta)$, so
 $z=r\cdot e^{i\theta}, \theta\in [0,\pi]$, $r\in (0,\delta)$.

We further note that

\[Re(f)(z)=\partial_x u=0, \text{ } \forall z= x+i\cdot 0, x\in (0, \delta), \text{ }{\rm and}\text{ } Im(f)(z)=\partial_y u=0, \text{ } \forall z=x+i\cdot 0,  x\in (-\delta,0)
\] 
\medskip

Now, define $\zeta=\sqrt{z}$, via $\zeta=\sqrt{r}e^{i\cdot\theta/2}$.  Then we obtain a holomorphic function $F(\zeta)$ 
on the set 

\[
D_{++}(\sqrt{\delta}):=\{ x^2+y^2\le\delta, x,y> 0\}, 
\]
defined by 

\begin{equation}
F(\zeta)=f(\zeta^2). 
\end{equation}
Moreover $F(\zeta)$ extends continuously to the positive $x$-axis and the positive $y$-axis. 
Now $Re[F]=0$ on the positive $x$-axis, while $Im[F]=0$ on the 
positive $y$-axis (in $D_{++}(\sqrt{\delta})$). 

\newcommand{\z}{\zeta}
We note  the volume forms are related as follows: 
\begin{equation}
\label{vol forms}
dzd\overline{z}=4|\z|^2d\z d\overline{\z}
\end{equation}
Thus we note:

\begin{equation}
\label{L^2 norms}
 \int_{D_{++}(\sqrt{\delta})} |F(\z)|^2 |\z|^2 d\z d\overline{\z}=\frac{1}{4}\int_{D_+(0,\delta)} |f(z)|^2dzd\overline{z}<+\infty.
\end{equation}

 We will next show that $F(\z)$ admits an extension to a holomorphic 
 function $\tilde{F}(\zeta)$ 
 on the punctured disc $D(0,\sqrt{\delta})\setminus \{0\}$, with finite 
 $L^2(|\zeta|^2d\zeta d\overline{\zeta})$-norm.  (The $L^2$ norm defined with respect to the volume element $|\zeta|^2d\zeta d\overline{\zeta}$). 
 This will imply that $\tilde{F}(\zeta)$ has a simple pole of rank 1 at 
 the origin,  and  will translate to the desired claim on $f$. 
  
   We construct the holomorphic extension of $F$ explicitly: 
   We obtain this by two Schwarz reflections.  We first extend
   $F$ to the quadrant
   
   \[
   D_{+-}(0,\sqrt{\delta})= \{ x^2 +y^2\le \delta, x> 0, y\le 0\},
   \]
   by using Schwarz reflection to define 
   $\tilde{F}(\overline{\z})= \overline{F(\z)}$ on $D_{+-}(0,\sqrt{\delta})$. 
($\tilde{F}=F$ on $D_{++}   (0,\sqrt{\delta})$). 
   Using the continuity of $F(\z)$ away from the origin 
   we obtain $\tilde{F}$ holomorphic on 
   $\bigg( D(0,\sqrt{\delta})\setminus \{0\}\bigg)\bigcap \{Re(\z)> 0\}$. 
   Note that $Im[\tilde{F}(\z)]=0$  on 
    the imaginary axis, away from the origin, in view of the zero Neumann 
    property of the original function $u(x,y)$.   
   We  then perform another Schwarz reflection across the imaginary 
   axis. 
   
 First consider the new holomorphic function  
   $G(i\z):=i\cdot \tilde{F}(\z)$,  which is defined for
    $\bigg( \z\in D(0,\sqrt{\delta})\setminus \{0\}\bigg)\bigcap \{Im(\z)< 0\}$. This again extends continuously to the real axis away from $0$. 
   
Then  we  define the holomorphic extension  $G(\z) $ on $D(0,\sqrt{\delta})\setminus \{0\}$ via: 
       $G(\overline{\z})=\overline{G(\z)}$. 
       Finally we  define 
       
       \[
       \tilde{F}(-i\cdot \z)= -iG(\z)\text{ } {\rm for} \text{ }\z\in \bigg( D(0,\sqrt{\delta})\setminus \{0\}\bigg) \bigcap \{Re \z\le 0\}.
       \] 
       
        The resulting function $\tilde{F}(\z)$  is holomorphic in the punctured disc $D(0,\sqrt{\delta})\setminus \{0\}$. 
       
We next seek to understand the pole of $\tilde{F}$ at the origin. 
By construction of the two extensions: 
        
\[
\|\tilde{F}\cdot|\z|\|_{L^2\big[D(0,\sqrt{\delta})\big]}=4 \|\tilde{F}\cdot|\z|\|_{L^2\big[D_{++}(0,\sqrt{\delta})\big]}. 
\] 
        Now, recalling \eqref{L^2 norms}, \eqref{H1 to L2} we see that 
                                                        $\|\tilde{F}\cdot |\z|\|_{L^2\big[D(0,\sqrt{\delta})}<\infty$. 
        It follows  (by the Riemann removable singularity theorem) 
        that $\tilde{F}(\z)\cdot \z$ is holomorphic. 
        
         In other words,  there exists an $A\in\mathbb{C}$ so that: 
         
         \[
        \tilde{F}(\z)=\frac{A}{\z}+r(\z), 
\]        
where $r(\z)$ is a holomorphic (and thus smooth) function on $D(0,\sqrt{\delta})$.

       Translated to $f(z)$ (and ultimately $u(x,y)$), 
       this means: 
       
       \begin{equation}\begin{split}
&      \partial_x u(x,y)+i\cdot \partial_y u(x,y)= f(z)= \frac{A\cdot \sqrt{\overline{z}}}{\|z\|}+r(\sqrt{z})
\\&=A\cdot r^{-1/2}   \cdot \left(\cos(\frac{\theta}{2})-i\cdot \sin(\frac{\theta}{2})\right)+r(\sqrt{z})
\end{split}   
       \end{equation}
       Using the fact that $\partial_y u(x,0)=0$ for $x<0$ we derive that $A$ must be purely imaginary.  We
        write $A=i\cdot\alpha$, $\alpha\in\mathbb{R}$. 
       
       We let $R(x)$ be a generic ${\cal C}^{1/2}$ function over $[0,\delta)$ or $(-\delta, 0]$ respectively. 
       
Approaching from \emph{inside} the electrode, i.e. for $x>0$: 
        
        \[
        \partial_y u(x, 0)=\alpha\cdot x^{-1/2}+ R(x), 
        \partial_x u(x,0)= 0.
        \]
      Approaching $x=0$ from the \emph{left} (i.e.~from \emph{outside} the electrode)  we derive:

      \[
        \partial_y u(x,0)=0,  \partial_x u(x,0)= \alpha |x|^{-1/2}+R(x). 
        \]  
        
        Our claim then follows, by composing $u$ with the inverse of the Riemann mapping $\cal R$, and noting that $|\frac{dx}{ds}|$ on $\partial \Omega$ is uniformly bounded above and below. 
        The latter follows since ${\cal R}^\prime(z)$ and $({\cal R}^\prime(z))^{-1}$  are
         uniformly bounded and uniformly bounded away from zero. 
        \end{proof}

        \subsection*{Edge of the electrode  asymptotics in the
         general setting.}
        
 Here we state and prove the result in general dimensions $n\ge 2$ and for 
the more general PDE \eqref{the PDE} with $\cal L$ being in the 
form 
\eqref{operator}. 

Since our result concerns the local behaviour of $u$ near any point $P$ on the edge of an electrode $E_i$ 
we can have $\cal L$ being defined over any 
  compact manifold $M$ with smooth boundary $\partial M$, and then $g$ would be  
a Riemannian metric on this manifold-with-boundary. So we use $M$
 instead of $\Omega$ in this section.  $\nabla u$ will be the gradient of $u$ with respect to the metric $g$. 
 Also the function spaces $L^2(M),  H^1(M)$ below are with respect to the metric $g$, unless stated otherwise. 
 (Further down in the proof this will no longer be the case, as will be clear by our notation).

We impose the CEM boundary condition with respect to a finite number of electrodes $E_i$ as in \eqref{CEM bdry}. 
We will be making the assumption that the problem \eqref{the PDE} subject to \eqref{CEM bdry} has a unique solution in $H^1(M)$ 
for all $\rho\in L^2(M)$. 
(This is true if $a^i(x), V(x)=0$ in view of \cite{SomChenIsaac}--in general it is an additional condition on the operator $\cal L$ that needs to be imposed).  

Our goal is to derive a more precise understanding of $\nabla u$ upon approach to the edge $\partial E$ of any electrode $E$. 
In higher dimensions,  our result can be stated after we introduce  a suitable coordinate system.

We commence with choosing coordinates $x_1, \dots x_{n-2}$ \emph{on} $\partial E$ near $P$. 
To partly normalize,  we require $x_1(P)=\dots =x_{n-2}(P)=0$. 
 We extend these by adding an $(n-1)$-coordinate $x_{n-1}$ on $\partial M$ so that $\{x_{n-1}=0\}\subset \partial E_i$. 
 Finally we add a final coordinate $x_n$ on $M$ so that $\{x_n=0\}\subset M$, and $x_n>0$ on ${\rm int}(M)$. 
 The only further normalization we impose is that at $P$ the  metric $g_{ij}$ (ith respect to these coordinates) 
 takes the form $g_{ij}(P)=\delta_{ij}$.

Our Theorem in this general case is the following: 

\begin{theorem}
\label{gen thm}
Consider a solution to \eqref{the PDE} under the regularity assumptions in the introduction. 
Assume that the operator $\cal L$ in \eqref{operator} has the property that for any $\rho\in L^2(M)$ there exists a solution $u\in H^1(M)$ 
to \eqref{the PDE}, for any CEM boundary condition data.  

Then, for any point $P\in \partial E$ for any electrode $E$ it follows that the gradient $\nabla u$ satisfies the following 
bound upon approach to the point $P$ (in the coordinates just constructed): 

 There exits a value $A\in \mathbb{R}_+$ so that as $x_{n-1}\to 0$:
\[
 |\nabla u|_g (x_1,\dots, x_{n-2},x_{n-1}, 0)=A|x_{n-1}|^{-1/2}+O(1).
 \]
 In fact more generally, this holds upon approach to $N$ even from inside the manifold $M$: As $(x_{n-1}, x_n)\to 0$
 \begin{equation}\label{asympt claim}
 |\nabla u|_g (x_1,\dots, x_{n-2},x_{n-1}, x_n)
 =A\bigg[\sqrt{(x_{n-1})^{2}+(x_n)^2}\bigg]^{-1/2}+O(1).
 \end{equation}
 
\end{theorem}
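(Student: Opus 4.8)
The plan is to reduce the theorem to a model problem whose leading singularity is explicit --- Laplace's equation on a half-space with homogeneous Dirichlet data on a half-hyperplane ``crack'' --- and then to check that neither the variable coefficients and lower-order terms of $\mathcal L$, nor the bending of $\partial M$ and $\partial E$, disturb the leading term. First I would localize: shrinking $M$ to a small half-ball around $P$ and subtracting from $u$ the constant $U_i$ it takes on the electrode $E$, one may assume $u\in H^1$, $\mathcal L u=\rho$ with $\rho\in C^d$, that in the coordinates fixed before the statement the Dirichlet part of the boundary is $\{x_n=0,\ x_{n-1}>0\}$ with $u=0$ there, and that the Neumann part is $\{x_n=0,\ x_{n-1}<0\}$ with $\partial_\nu u=0$ there. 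Passing to boundary-normal coordinates for $g$ makes $\partial_\nu=\partial_{x_n}$ and the metric block-diagonal in $x_n$, so the even reflection $\hat u(x',x_{n-1},x_n):=u(x',x_{n-1},-x_n)$ solves a divergence-form elliptic equation $\hat{\mathcal L}\hat u=\hat\rho$ on the full punctured ball away from the half-hyperplane $K:=\{x_n=0,\ x_{n-1}\ge 0\}$, is bounded, and vanishes continuously on $K$, while the coefficients of $\hat{\mathcal L}$ still equal $\delta_{ij}$ at $P$. We are thus reduced to a crack problem: describe $\nabla\hat u$ near the codimension-two edge $\partial K=\partial E$.

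The core is the branched-cover unfolding, mirroring the substitution $\zeta=\sqrt z$ of the two-dimensional proof, now carried out in the $2$-plane $\{x'=\mathrm{const}\}$ transverse to $\partial E$. Setting $\zeta=\zeta_1+i\zeta_2=\sqrt{x_{n-1}+ix_n}$, with branch cut along $K$, maps the cracked ball onto a genuine half-ball $\{\zeta_2>0\}$, carries $K$ onto the flat piece $\{\zeta_2=0\}$ --- on which the transported function $\hat U(x',\zeta):=\hat u(x',\zeta^2)$ vanishes --- and carries $\partial E$ onto $\{\zeta=0\}$. By the conformal covariance of the planar Laplacian, $\hat{\mathcal L}$ becomes, after multiplication by a conformal factor $\sim|\zeta|^2$, an operator of the schematic form $\Delta_\zeta+|\zeta|^2\Delta_{x'}+(\text{a perturbation subordinate to this near the edge})$, with the right-hand side picking up a factor $|\zeta|^2$ from the volume element; in particular the ellipticity degenerates in the tangential directions as $\zeta\to 0$, but not in the $\zeta$-directions. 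Odd reflection of $\hat U$ across $\{\zeta_2=0\}$ then produces a bounded $H^1$ function on a full ball, the analogue of $\tilde F$ in the $2$D argument.

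It remains to extract the leading term. Near $\zeta=0$, with $x'$ frozen, the transported operator is to leading order just $\Delta_\zeta$ with homogeneous Dirichlet data on the line $\{\zeta_2=0\}$; the harmonic functions of $\zeta$ vanishing there that have finite weighted energy are $c(x')\,\mathrm{Im}\,\zeta$ plus terms vanishing to higher order (the negative-power harmonics are excluded exactly as the removable-singularity step excludes them in the $2$D proof), so one expects $\hat U=\alpha(x')\,\zeta_2+(\text{remainder})$. Since $\zeta_2=\bigl[\sqrt{x_{n-1}^2+x_n^2}\,\bigr]^{1/2}\sin(\theta/2)$, this unfolds to $\hat u=\alpha(x')\bigl[\sqrt{x_{n-1}^2+x_n^2}\,\bigr]^{1/2}\sin(\theta/2)+(\text{remainder})$, whose gradient has modulus $A\bigl[\sqrt{x_{n-1}^2+x_n^2}\,\bigr]^{-1/2}+O(1)$ with $A$ proportional to $|\alpha(0)|$; undoing the even reflection and the coordinate normalizations --- using that the metric and the change of coordinates are uniformly bounded and non-degenerate at $P$ --- gives exactly \eqref{asympt claim}. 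Here $A\ge 0$ is automatic since the left-hand side of \eqref{asympt claim} is non-negative, and $A$ depends only on the $1$-jet of $u$ at $P$ along $\partial E$.

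The step with no two-dimensional analogue, and the main obstacle, is making ``to leading order'' rigorous: one must show the unfolded solution $\hat U$ is regular enough near $\zeta=0$ --- essentially $C^{1,1}$ in the $\zeta$-variables, so that the remainder $\hat U-\alpha(x')\zeta_2$ has gradient $O(|\zeta|)$, which is what turns the remainder in $|\nabla u|$ into a genuine $O(1)$ rather than merely $o(|\zeta|^{-1/2})$ --- despite the tangential degeneracy of the transported operator and the at-best-Lipschitz roughness that reflecting across the Neumann part introduces into the coefficients. I would handle this by working in weighted Sobolev spaces of Kondratiev type adapted to the edge $\{\zeta=0\}$: elliptic estimates for the model operator $\Delta_\zeta+|\zeta|^2\Delta_{x'}$ in these spaces, combined with the fact that the full operator differs from the model by terms subordinate near the edge (plus smooth lower-order terms), yield --- after subtracting the explicit leading term $\alpha(x')\zeta_2$ --- a remainder in a weighted class corresponding to vanishing of order $2$ at the edge, i.e.\ precisely the $O(1)$ error. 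The roughness of the reflected metric enters only at the level of that remainder, since the associated perturbation of $\hat{\mathcal L}$ vanishes at $P$; alternatively one may precompose with a $C^d$ extension of $g$ across $\partial M$ and absorb the discrepancy in the same way.
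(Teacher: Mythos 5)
Your geometric scaffolding tracks the paper closely: you normalize coordinates so that the transverse $\{x_{n-1},x_n\}$ block of $g$ is Euclidean at $P$, you pass to the branched cover $\zeta=\sqrt{x_{n-1}+ix_n}$, you use even/odd reflections to extend across the boundary, and you identify the leading singularity with the lowest nonnegative mode $\alpha(x')\,\mathrm{Im}\,\zeta$ of the model 2D Dirichlet problem. That much is the same strategy. Where you genuinely diverge is in how the $x'$-directions enter, and it is precisely there that your argument has a gap rather than a proof.

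You propose to carry the full $n$-dimensional operator through the unfolding, obtaining the degenerate-elliptic model $\Delta_\zeta+|\zeta|^2\Delta_{x'}$, and then to invoke ``elliptic estimates in weighted Sobolev spaces of Kondratiev type adapted to the edge'' to subtract the leading term and control the remainder. This is the hardest step of your plan, you flag it yourself as ``the main obstacle,'' and you do not carry it out. The degeneracy of the transported operator in the $x'$-directions as $\zeta\to0$ means you cannot simply freeze $x'$ and quote 2D theory; you would need an edge/wedge calculus (the kind of machinery in Rempel--Schulze, Eskin, Kondratiev) with the right indicial analysis, and getting from ``remainder in a weighted class'' to the pointwise $O(1)$ in \eqref{asympt claim} is not automatic. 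As stated, the conclusion of your third paragraph is asserted, not proved.

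The paper avoids this entirely by \emph{first} establishing tangential regularity and only then going to two dimensions. Concretely, it differentiates the equation along the cutoff vector fields $X_i=\chi(x_i)\partial_{x_i}$, runs an energy estimate on a solid cylinder to bound $\nabla X^I u$ in $L^2$ for $|I|\le d$ (Lemma~\ref{restriction}), and then uses the trace theorem $(n-2)$ times to land on a single 2-disc $D^2_P(\epsilon)$ with $\partial^2_{ij}u\in L^p$ for $i,j\le n-2$. Those tangential second derivatives are then moved to the right-hand side as $L^p$ data in \eqref{elliptic simple}, so what remains is a \emph{non-degenerate} two-variable elliptic equation in $(x_{n-1},x_n)$. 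After isothermal coordinates, the $\sqrt{z}$ change, reflections, and a Green's-function bootstrap, one gets $W^{2,p}$ and hence $C^{1,\alpha}$ for the remainder $\delta R$, from which \eqref{stronger claim} and \eqref{asympt claim} follow. This reduction is the key idea you are missing: it converts the edge problem into a family of 2D problems and makes the whole argument elementary. If you want to salvage your route, you would have to actually prove the weighted estimates for $\Delta_\zeta+|\zeta|^2\Delta_{x'}$ (including identifying the indicial roots and showing the perturbative terms are subordinate in the right weighted norm), which is a substantial piece of work that your proposal only names. I would strongly recommend adopting the paper's tangential-regularity-first reduction instead, which also yields the stronger statement \eqref{stronger claim} with a continuous remainder and a continuous amplitude $A(x_1,\dots,x_{n-2})$.

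Two smaller remarks. First, your even reflection of $u$ across $\{x_n=0\}$ \emph{before} the branched cover, versus the paper's reflections in the $\zeta$-plane, are the same construction modulo reordering and are both fine; but note the reflection makes the coefficients only Lipschitz across the Neumann face, which you acknowledge, whereas the paper's 2D reduction never needs to reflect the metric in $x_n$. Second, the paper's remainder estimate \eqref{lead and rest} actually gives $O\bigl(r^{-1/2+\alpha}\bigr)$ for any $\alpha<1$, which is stronger than the $O(1)$ demanded by the theorem; your ``$C^{1,1}$ in $\zeta$'' target would be stronger still, but you would need it uniformly in $x'$, which again is exactly what the Kondratiev step would have to deliver.
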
   
\newtheorem{remark}{Remark}
   \begin{remark}
   In a suitable coordinate system, the claim can be made more precise--see \eqref{stronger claim} below. 
   \end{remark}

   \begin{proof}
   Our proof proceeds in steps. 
   
   Initially we will choose any point $Q$ on the boundary of the electrode and construct  a preferred coordinate system around $Q$. 
   The key feature of this will be that the metric $g$ has a splitting between $n-2$ dimensions that ``move'' along the boundary of
    the electrode and the remaining two dimensions which are transverse (and in fact normal,  with respect to $g$)  to the boundary of the electrode. 
    
In  the second step,
given the higher assumed regularity of $g_{ij}(x), a^i(x), V(x),\rho(x)$ we then 
 note that the regularity of $u$ one initially has (in $H^1(M)$) can be upgraded to a suitable Sobolev space 
which ``sees''  the $n-2$ derivatives in the directions $x_1,\dots, x_{n-2}$ up to order ${{d}}$. 
    (We measure these in $H^1(M)$, and we call them the ``tangential directions'', since they are tangential to $\partial E$). 
After this has been accomplished, we can 
treat the equation \eqref{the PDE} as a PDE in the \emph{two} 
dimensions $(x_{n-1}, x_n)$, where all other derivatives of the unknown 
$u$ have already been bounded, and are moved to the RHS (making use of the trace theorem).  
The analysis in the two dimension mimics the complex-analytic techniques we used in the previous section.

 At the third step we obtain our asymptotic expansion.  The argument here makes use to the technique 
 in complex analysis we did in dimension two,   but is more analytic: We introduce isothermal coordinates 
 to replace the $x_{n-1}, x_n$ coordinates, yielding  a complex coordinate $z$.  
 Then,  we perform a change of the differential structure, 
 based  on the transformation $\zeta=\sqrt{z}$.  This allows us 
 to perform even and odd reflections to obtain an elliptic PDE on a punctured disc. 
 The desired estimates are obtained by use of Green's functions, obtaining bounds in $W^{2,p}$ spaces, with $p>2$. 
 Our result in the end is obtained by these estimates and Sobolev 
 embedding. 
 \medskip
\medskip

 The  normalization of coordinates comes first: We consider our chosen
  point $P\in\partial E$. We initially construct $(n-2)$ coordinates 
 on $\partial E$ around $P$,  which we denote by $(x_1,\dots x_{n-2})$,
 with $P$ having coordinates $(0,0,\dots, 0)$.

 We extend these by adding an $(n-1)$-coordinate $x_{n-1}$ on 
 $\partial M$ so that $\{x_{n-1}=0\}\subset \partial E$  and $\partial_{n-1}$ points into $E$. 
 Finally we add a final coordinate $x_n$ on $M$ so that 
 $\{x_n=0\}\subset M$, and $x_n>0$ on ${\rm int}(M)$. In particular the
  point $P$ has  coordinates $(0,0,\dots ,0)$. 
 
We impose a slight further normalization by requiring that  on $\partial E$ near $P$,

\begin{equation}\label{normality}
\forall j\in \{1,\dots n-2\}\text{ }g_{j(n-1)}=g_{jn}=0\text{ }\&\text{ } g_{AB}=\delta_{AB},\text{ }{\rm for} 
\text{ }
A,B\in \{n-1,n\}. 
\end{equation}
  Let $\Sigma(P)=\{x_{n-1}=0, x_n=0\}$ be the 2-surface which is then normal to $\partial E$ at $P$.

{Moreover},  we normalize the coordinates $x_1,\dots, x_{n-2}$ off of $\Sigma(P)$ 
 to ensure that on $\Sigma(P)$ the cross terms $g_{Ai}=0$ for all $A=n-1, n$ and $i=1,\dots, n-2$.  This can be achieved by altering the 1-jet of the 
 coordinates $x_1,\dots, x_{n-2}$ off of $\Sigma(P)$, by adding suitable multiples of $\partial_{n-1}, \partial_n$ at each point.

  The metric $g$ can then be expressed in these coordinates as:
  
   \[
   g=\sum_{i,j=1}^{n-2} g_{ij} dx^i dx^j +\sum_{A,B=n-1,n} g_{AB} dx^Adx^B
   +\sum_{i=1}^{n-2} \sum_{A=n-1}^n g_{Ai} dx^Adx^i.
   \]
Where the cross terms $g_{Ai}=0$ on $\Sigma(P)$ (i.e. on $\{x_{n-1}, x_n=0\}$). 
   \medskip
   
   We will then show the following statement, which implies 
   (and in fact strengthens) \eqref{asympt claim}:
   
 In this coordinate system, for all indices  
 $i\in \{1,\dots n-2\}$  $\partial_i u$ is a ${\cal C}^\alpha$ 
 function. The other components behave as follows: 
 
  Let $R$, $\varphi$ be defined by $R=\sqrt{(x_{n-1})^2+(x_n)^2}$,   
  $\varphi={\rm arcsin}\frac{x_n}{\sqrt{(x_{n-1})^2+(x_n)^2}}$.
Then there exists continuous functions $A(x_1,\dots x_{n-2})$,  $b(x_1,\dots, x_n)$ so that: 

 \begin{equation}\begin{split}\label{stronger claim}
&\bigg( \partial_{n-1} u,  \partial_n u\bigg) (x_1,\dots x_n)= 
A(x_1,\dots x_{n-2})  \bigg(R^{-1/2} \sin(\varphi/2), 
R^{-1/2} \cos(\varphi/2)\bigg) 
\\&+b(x_1,\dots, x_n).
\end{split} \end{equation}

   The fact that \eqref{stronger claim} implies \eqref{asympt claim}
   follows by simple Riemannian geometry, by just checking that for any point $Q$ in our coordinate neighborhood there exits a fixed $B$ so that: 
   
   \[
 B^{-1} \le R^{-1/2}\cdot [{\rm dist}(Q,P)]\le B. 
   \]
   This is immediate from our construction of our  coordinates.
   \medskip

We now move to the second step. 
We start by constructing a cylindrical  neighborhood of $P$ in the coordinates $x_1, \dots, x_n$ just constructed: 
Given the  point $P\in \partial E$ on the boundary of the electrode $E$, 
we then let $D^2_P(0,2\epsilon)$ stand for the half-disc in the 
$(x_{n-1}, x_n)$ half-plane centered 
at 
the origin, with radius $2\epsilon$.  We then also denote the solid cylinder 
$D^2_P(0,2\epsilon)\times [-2\epsilon,2\epsilon]\times [-2\epsilon,2\epsilon]$ by ${\rm Cyl}(2\epsilon)$.  For 
$\epsilon>0$ 
small enough the solid cylinder ${\rm Cyl}(2\epsilon)$ is contained in our coordinate patch around $P$.  
We will be making this choice of $\epsilon$  from now onwards. 

Consider the derivatives $\partial^C_{ij} u$,  $C=1,2$,  $ i,j\in \{1,\dots, n-2\}$.
We claim that these derivatives upon restriction to any disc $D^2_P(0,\epsilon)$
 lie in $H^1(dx_{n-1}dx_n)$:

\begin{lemma}\label{restriction}
The restriction of any derivative $\partial^C_{ij} u$,  $C=1,2$,  $ i,j\in \{1,\dots, n-2\}$  to 
the disc $D^2_P(0,\epsilon)$ lies in 
$H^1(D^2_P(0,\epsilon))$ (with respect to the volume form $dx_{n-1}dx_n$).  
\end{lemma}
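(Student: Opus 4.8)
The plan is to bootstrap interior elliptic regularity in the tangential directions $x_1,\dots,x_{n-2}$, exploiting the fact that the CEM boundary conditions and all the coefficients of $\cal L$ are translation-invariant only to finite order in those directions, but that the \emph{singular set} $\partial E = \{x_{n-1}=x_n=0\}$ is itself invariant under translations in $x_1,\dots,x_{n-2}$. So the first move is to reduce to a statement about tangential difference quotients. Fix a multi-index $\beta$ in the variables $x_1,\dots,x_{n-2}$ of order $|\beta|\le 2$, and for $h\neq 0$ small consider the difference quotient $D_h^\beta u$ (iterated difference quotients along coordinate directions among the first $n-2$). Since $\partial E$ is preserved by the shift $x_i\mapsto x_i+h$ for $i\le n-2$, the shifted function $u(\cdot+he_i)$ satisfies the CEM boundary conditions for a shifted operator $\cal L$ with shifted data $\rho$; subtracting, $D_h u$ solves an equation $\mathcal L (D_h u) = (D_h\rho) - (D_h^{\mathrm{coeff}}\mathcal L)\,u(\cdot + h e_i)$ with homogeneous CEM-type boundary conditions (the Dirichlet constant on the electrode becomes a difference of constants, still constant on the electrode; the zero-Neumann condition is preserved). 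The uniqueness-and-existence hypothesis on $\cal L$ (assumed in the theorem) then gives an $H^1$ bound on $D_h u$ uniform in $h$, hence $\partial_i u\in H^1$; iterating $d=\lceil (n+2)/2\rceil$ times uses the assumed $\mathcal C^{d}$ (resp. $\mathcal C^{d+1}$) regularity of $a^i,V,\rho$ (resp. $g^{ij}$), which is exactly why those orders of regularity were imposed.

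Once $\partial_1^{\beta} u\in H^1(M)$ for all tangential $\beta$ with $|\beta|\le d$ (in particular $|\beta|\le 2$, which is all we need here), the next step is to restrict to the two-dimensional slices. The trace/restriction theorem in the tangential variables does the job: a function in $H^1$ of all $n$ variables whose first $d$ tangential derivatives are also in $H^1$ restricts, for a.e. fixed $(x_1,\dots,x_{n-2})$, to a function on the $(x_{n-1},x_n)$ half-disc lying in $H^1(dx_{n-1}dx_n)$, with the needed derivatives. Concretely: $\partial^C_{ij}u$ with $C=1,2$, $i,j\le n-2$ has one more tangential derivative's worth of room ($\beta$ of order $2$, and we took $d\ge 3$ for $n\ge 2$... for $n=2$ there are no tangential directions and the statement is vacuous/trivial), so $\partial^C_{ij}u$ itself and its $x_{n-1},x_n$-derivatives $\partial_{n-1}\partial^C_{ij}u$, $\partial_n\partial^C_{ij}u$ all lie in $L^2$ of the full cylinder ${\rm Cyl}(2\epsilon)$; by Fubini and the coarea-type slicing this forces, for a.e. slice, membership in $H^1(D^2_P(0,\epsilon),dx_{n-1}dx_n)$. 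That $D^2_P(0,\epsilon)\subset{\rm Cyl}(2\epsilon)$ sits inside the coordinate patch was arranged by the choice of $\epsilon$.

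The one technical subtlety I would be careful about is the precise form of the boundary condition satisfied by $D_h u$ (and its tangential iterates): the integral Neumann constraint $\int_{E_i}\partial_\nu u\,dV = J_i$ becomes, after differencing, $\int_{E_i^h}\partial_\nu^h (u(\cdot+h))\,dV^h - \int_{E_i}\partial_\nu u\,dV = 0$ (since the $J_i$ are constants), but $E_i^h\neq E_i$ in general — however, near the \emph{interior} of $E_i$, away from the edge we're studying, and near the \emph{exterior}, the conditions are pure constant-Dirichlet or pure zero-Neumann and the edge $\partial E$ is translation invariant, so the only place where $E_i^h$ and $E_i$ genuinely differ is harmlessly away from $P$; alternatively one localizes with a cutoff $\chi(x_1,\dots,x_{n-2})$ supported near $P$ and absorbs commutator terms $[\mathcal L,\chi]u$, which are controlled since they involve only already-bounded lower-order tangential derivatives of $u$. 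I expect this localization/boundary-condition bookkeeping to be the main obstacle; the Sobolev/Fubini restriction step at the end is routine.
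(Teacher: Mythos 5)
Your overall strategy --- gain regularity in the tangential directions $x_1,\dots,x_{n-2}$ by commuting a tangential operator through the equation, and then slice via a trace argument to land in $H^1(D^2_P(0,\epsilon),dx_{n-1}dx_n)$ --- matches the paper's. The paper implements the tangential step via a \emph{local} energy estimate: it commutes the cutoff vector fields $X_i=\chi(x_i)\partial_i$ through the equation, multiplies the commuted equation by $X_i u$, integrates over $\mathrm{Cyl}(2\epsilon)$, and integrates by parts. The delicate point is the treatment of the second-order commutator terms $m^{\alpha\beta}\partial^2_{\alpha\beta}u$ (coming from the vector field hitting the coefficients of $\Delta_g$); for $\alpha,\beta\in\{n-1,n\}$ these are \emph{not} a priori in $L^2$ --- the theorem is precisely about the blow-up of such derivatives at the edge --- so they cannot be handled by Cauchy--Schwarz alone. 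The paper performs a further integration by parts on exactly these terms, moving a derivative onto $X_i u$, and then absorbs with a small $\delta$.

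Your route instead takes tangential difference quotients $D_h u$ and tries to invoke the global existence-and-uniqueness hypothesis to get a uniform $H^1$ bound. That step has a genuine gap: the source term $(D_h^{\mathrm{coeff}}\mathcal L)\,u(\cdot+he_i)$ contains contributions of the form $(D_h g^{\alpha\beta})\cdot\partial^2_{\alpha\beta}u(\cdot+he_i)$, and since $\partial^2_{\alpha\beta}u$ is not known to be in $L^2$ (let alone uniformly in $h$), the right-hand side is not an admissible $L^2$ source for the global well-posedness statement. Your remark that the commutator terms ``involve only already-bounded lower-order tangential derivatives of $u$'' is where this is hidden: the terms coming from $[\mathcal L,\chi]$ are indeed lower order, but the terms coming from the tangential variation of the coefficients produce full second derivatives of $u$. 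Separately, you correctly flag that $D_h u$ does not satisfy the CEM boundary condition globally because the $E_j$ need not be tangentially translation invariant; but the localization fix you propose necessarily discards the global well-posedness invocation, at which point you are doing a cutoff energy estimate --- i.e., the paper's argument --- and you would still have to supply the integration-by-parts trick for the dangerous $\partial^2_{\alpha\beta}u$, $\alpha,\beta\in\{n-1,n\}$, terms. The final Fubini/trace step you describe is fine and is the same as the paper's.
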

Once this Lemma has been obtained
we will be able to treat \eqref{Lapla in 2} as a Poisson equation (over a half-disc) and understand the asymptotics of the solution 
at the origin.   
  \medskip
  
  \begin{proof}
  
We obtain our $H^1$ estimates for $\partial^C_{ij} u$, $C\ge 2, i,j\in \{1,\dots, n-2\}$
on $D^2(0,\e)$  by obtaining estimates for higher 
 derivatives of 
$u$ (in the directions $\partial_k, k\in \{1,\dots, n-2\}$)
over ${\rm Cyl}(\epsilon)$, in conjunction with the trace theorem.

  The argument goes as follows: Consider a smooth cutoff function $\chi(q)$ with $\chi(q)=1$ for $|q|\le \epsilon$ 
  and $\chi(q)=0$ for $|q|\ge 3/2\epsilon$.  Consider the vector fields 
  $X_i=\chi (x_i)\cdot \partial_{x_i}$, $i=1,\dots, n-2$.  
  (Unless stated otherwise, $i\in \{1,\dots, n-2\}$ in any $X_i$ from now on). 
  We claim that for any multi-index $I= (i_1, \dots, i_k)$ with $i_j\in \{1,\dots, n-2\}$ and 
  $k\le {{d}}$ 
    we have 
  $\partial^Iu\in H^1({\rm Cyl}(2\epsilon))$.  We derive this iteratively: 
  
  First take one $X_i$ derivative of $u$. We derive an equation: 
  
   \begin{equation}\label{Diffd PDE}
  - \Delta_g [X_i u]= \sum m(\partial^2 u, \partial u)
   \end{equation}
   Here $m(\partial^2 u,\partial u)$ stands for  a generic linear combination of terms 
   $m^{\alpha\beta}(x)\cdot \partial_{\alpha\beta}^2u, m^\alpha(x)\cdot \partial_\alpha u$,        
where $m^{\alpha\beta}, m^\alpha$ are generic ${\cal C}^{{{d}}-1}$ functions with support in ${\rm Cyl}(2\epsilon)$. 

Let us  note that by the CEM boundary condition on $u$, we know that $X_iu=0$ on $E_i$ and 
$\partial_\nu [X_i u]=0$ on $\partial\Omega\setminus (\bigcup_{i=1}^NE_i)$. 
(Note that by construction $\nu=\partial_{x_n}$ on $\partial M$ near $P$, thus $[\nu, X_i ]=0$). 

 We then multiply 
        \eqref{Diffd PDE} by $X_i u$ and integrate the resulting equation over ${\rm Cyl}(2\epsilon)$.  
        We then sum the resulting equations for each $i=1,\dots, n-2$. 
We denote the equation we obtain schematically by: 

\begin{equation}
\sum_{i=1}^{n-2} \int_{{\rm Cyl}(2\epsilon)}{\rm LHS} \bigg[\eqref{Diffd PDE}  \bigg] \cdot (X_i u) dV_g= 
\sum_{i=1}^{n-2} 
 \int_{{\rm Cyl}(2\epsilon)}{\rm RHS} \bigg[\eqref{Diffd PDE}  \bigg] \cdot (X_i u) dV_g.\label{expanded}
\end{equation}        
        
        We perform integration by parts in the LHS  to derive:\footnote{We can introduce a mollification of $X_i u$ to justify this step strictly--we skip the details.  }
       
       \begin{equation}\label{IBP}
      - \int_{{\rm Cyl}(2\epsilon)}X_iu\cdot \Delta_g [X_i u]dV_g =\int_{{\rm Cyl}(2\epsilon)}|\nabla [X_iu]|_g^2 dV_g.
       \end{equation}
         The boundary terms away from $\partial M$ vanish in view of the vanishing of $X_i$ there. 
         The boundary terms on $\partial M$ vanish in view of the boundary conditions of $X_i u$ 
         on $E_i$ and off of $E_i$ (vanishing Dirichlet and Neumann data, respectively).  
   
   On the other hand, in the RHS of the equation we claim that for some $D$ large enough (depending on the 
   ${\cal C}^1$-norm of $m^{\alpha\beta}, m^\alpha$)
   
   \begin{equation}\begin{split}
&\sum_{i=1}^{n-2}   \int_{{\rm Cyl}(2\epsilon)}\bigg[m^{\alpha\beta}(x)\cdot \partial_{\alpha\beta}^2u+m^\alpha
(x)\cdot \partial_\alpha u  \bigg] \cdot X_i u dV_g
\\&\le \sum_{i=1}^{n-2} D\cdot \bigg[
\delta  \int_{{\rm Cyl}(2\epsilon)}|\nabla  [X_i u]|^2_g dV_g
+\delta^{-1} \int_{{\rm Cyl}(2\epsilon)}|\nabla   u|^2_g dV_g\bigg] 
\label{RHS bd}   \end{split}\end{equation}
   
   We prove \eqref{RHS bd}: For all terms in $m^\alpha(x)$ in the first factor the result is immediate using Cauchy-Schwarz; those terms are bounded by $\int_{{\rm Cyl}(2\epsilon)}|\nabla   u|^2_g dV_g$. 
   For the terms in $m^{\alpha\beta}(x)$, first consider the case were at least one of $\alpha,\beta$ is not 
   equal to $n-1,n$.  For those terms we again use Cauchy-Schwarz to bound them by the RHS of \eqref{RHS bd} 
   (for some $D$).  Finally,  for the terms where both indices $\alpha,\beta$ have one of the values $n-1,n$ we 
   perform an integration by parts in the direction $\alpha$ to write: 
   
   \begin{equation}\begin{split}
&   \int_{{\rm Cyl}(2\epsilon)}\bigg[m^{\alpha\beta}(x)\cdot \partial_{\alpha\beta}^2u  \bigg] \cdot X_i u dV_g=
-
    \int_{{\rm Cyl}(2\epsilon)}\bigg[m^{\alpha\beta}(x)\cdot \partial_{\beta}u  \bigg] \cdot \partial_\alpha[ X_i u] dV_g
  \\&  -\int_{{\rm Cyl}(2\epsilon)}\bigg[\partial_\alpha \bigg(\log\sqrt{\det(g)}\cdot m^{\alpha\beta}(x)\bigg)\cdot
   \partial_{\beta}u  \bigg] \cdot  X_i u dV_g
   \end{split}\end{equation}
   
   Using the regularity of the metric in our coordinate patch, we see that this is bounded by the RHS of \eqref{RHS bd}. 
   Increasing the constant $D$ if needed, we derive \eqref{RHS bd}.
   
   We can then choose $\delta$ small enough relative to $D$ and absorb the first term in the RHS of \eqref{RHS bd}
   into the first term in the LHS of \eqref{expanded} (making use of \eqref{IBP}); we derive:
   
   \[
\sum_{i=1}^{n-2}   \int_{{\rm Cyl}(2\epsilon)}|\nabla [X_iu]|_g^2 dV_g \le 2D\delta^{-1}
    \int_{{\rm Cyl}(2\epsilon)}|\nabla   u|^2_g dV_g.
   \]The RHS of the above is already bounded, since the solution $u$ to \eqref{the PDE} is known (or, for non-zero lower order coefficients in \eqref{operator}, assumed) 
    to lie in $H^1(M)$. 
   
    We can iterate this procedure to obtain the bounds: 
    
    \[
    \sum_{|I|\le {{d}}} \int_{{\rm Cyl}(2\epsilon)}|\nabla [X^I u]|_g^2 dV_g \le 2D\delta^{-{{d}}} \int_{{\rm Cyl}(2\epsilon)}|\nabla   u|^2_g dV_g.
    \]

Invoking the trace-theorem in each of the $n-2$ coordinates $x_1, \dots, x_{n-2}$ 
$(n-2)$ times (at each invocation we give up half a derivative in $L^2(M)$), 
we derive that for every point $Q\in  [-\epsilon,\epsilon]\times\dots\times[-\epsilon,\epsilon]$ and every multi-index $|I|\le 2$:
\[
\int_{D^2_P(\epsilon)} |\nabla [X^I u]|_g^2 dx_{n-1}dx_n<\infty. 
\]

This in particular implies that all  second  derivatives $\partial^2_{ij}u$ with $i,j\in \{1,\dots n-2\}$ lie
in $H^1(D^2(P))$.  We can invoke the Sobolev embedding theorem to derive that 
all second derivatives $\partial_{ij}u$ with $i,j\in \{1,\dots n-2\}$ lie in $L^p(D^2(P))$ for any $p<\infty$.

\end{proof}
   \medskip
   
   \newcommand{\sg}{\slashed{g}}
\newcommand{\beq}{\begin{equation}}
\newcommand{\eeq}{\end{equation}}

In particular, returning to the original equation 
${\cal L}[u]=\rho(x)$
and expanding it in the coordinates $x_1,\dots, x_n$ around any point 
$Q\in \partial E\subset \partial M$  we derive a Poisson-type  elliptic 
equation on \emph{each}     disc $D_Q^2(\e)$. To describe this equation let us denote by $\slashed{g}_Q$ the restriction $g|_{D^2_Q(\e)}$.

 We will expand the Laplacian with respect 
   to the coordinates $(x_1,\dots, x_{n-2}, x_{n-1}, x_n)$.  We use upper-case indices $A,B$ for entries between
    $1,\dots, n-2$. 
   We use Greek  indices $\alpha,\beta$  for $x_{n-1}, x_n$. 
Christoffel symbols will accordingly be denoted $\Gamma_{AB}^C, \Gamma_{AB}^\gamma, \Gamma_{A\beta}^C$ etc.~according 
to the values that the entries are allowed to take (upper case means values $1,\dots, n-2$ and lower-case means
 $n-1, n$).    
 
   We also let $\overline{\Delta}_\sg$ for the 
Laplacian  on each such leaf $D^2_Q(\e)$, with respect to the 
metric $\sg_Q$. 
   
    We next express the n-dimensional Laplacian $\Delta_g$  in these new coordinates: 
    The Einstein summation convention is 
    used for the upper-case and lower-case indices separately, so 
    ${}^{A\gamma}{}_{A\gamma}$ stands implicitly for 
    $\sum_{A=1}^{n-2}\sum_{\gamma=n-1}^n{}^{A\gamma}{}_{A\gamma}$. 
    
   \begin{equation}\begin{split}
    &  \Delta_g= g^{AB}\bigg[ \partial^2_{AB}
    -\Gamma_{AB}^C\partial_C-\Gamma_{AB}^\gamma\partial_\gamma\bigg] +g^{A\beta}
    \bigg[ \partial^2_{A\beta}
    -\Gamma_{A\beta}^C\partial_C-\Gamma_{A\beta}^c\partial_c\bigg]
    \\&+  \overline{\Delta}_{\sg}-g^{\alpha\beta}\Gamma_{\alpha\beta}^C\partial_C
   \end{split}\end{equation}

   This implies that a solution $u$ to ${\cal L}u=\rho(x)$ solves the following 
   PDE on each 2-dimensional leaf
   $(x_1,\dots, x_{n-2})={\rm fixed}$: 
   \begin{equation}\begin{split}
&-\overline{\Delta}_{\sg}u= g^{AB}\bigg[ \partial^2_{AB}u
    -\Gamma_{AB}^C\partial_Cu-\Gamma_{AB}^c\partial_cu\bigg] +g^{A\beta}
    \bigg[ \partial^2_{A\beta}u
    -\Gamma_{A\beta}^C\partial_Cu-\Gamma_{A\beta}^\gamma\partial_\gamma u\bigg]
    \\&  -g^{\alpha\beta}\Gamma_{\alpha\beta}^C\partial_Cu-a^I(x)\nabla_I u-a^i(x)\cdot \nabla_i u-V(x)\cdot u+\rho(x).
\label{Lapla in 2}
   \end{split}\end{equation}

We now restrict to the disc  $D^2(P)$. 
  (I.~e.~ we choose $Q=P$).  We recall  that on $D^2(P)$ $g^{A\alpha}=0$.

Using this vanishing of the cross terms and  in view of the regularity of the background metric $g$ and 
of the bounds we have obtained on first and second derivatives 
$\partial_{AB}u, \partial _{A\beta}u$ on $D^2_P(\e)$, we can express \eqref{Lapla in 2} 
in short as: 

\beq\label{elliptic simple}
-\overline{\Delta}_{\sg}u+ m^\alpha \partial_\alpha u=J_P,
\eeq
where all first order derivatives in the $x_{n-1}, x_n$ directions 
are moved to the LHS, and all other terms are kept on the right, and treated as a given RHS.  
As derived,  $m^\alpha$ are continuous functions and  $J_P(x_{n-1}, x_n )$ is an $L^p$ function over $D^2_P(\e)$.   Moreover 
the function $J_P(\cdot, \cdot)$ depends continuously on the point $P$, in the 
$L^p$ norm. 

Our next goal is to read off the asymptotics of $u$ at the point $Q$ from this 2-dimensional elliptic equation \eqref{elliptic simple}.  
   To do this, we perform a change of the two coordinates $x_{n-1}, x_n$:
    We define new coordinates $x,y$ which are isothermal 
  coordinates for  
  
  \[
  \sum_{\alpha,\beta=n-1, n}g_{\alpha\beta}(x_1, \dots x_{n-2})dx^\alpha dx^\beta. 
  \]
  This is done  by 
  solving $\overline{\Delta}_{\sg} y=0$ on $D^2_P(\e)$ with $y=x_{n-1}$ on 
  $\partial D^2_P(\e)$ and choosing $x$ to be the harmonic conjugate of 
  $y$, i.e. 
  \[
  (dx)_\alpha=\varepsilon_{\alpha\beta}(dy)^\beta,\] 
  normalized so that $x(P)=0$. 
  In particular in these coordinates 
 $\{y=0\}=\{x_n=0\}$ and $y>0$ over ${\rm int}\bigg[ D^2_P(\e)\bigg]$, by the maximum principle.  
Let us denote the coordinate transformation from $x_{n-1}, x_n$ to $(x,y)$ by $T$:
\beq
T(x_{n-1}, x_n)=(x,y). 
\eeq  
 
  In these coordinates $\{x,y\}$,  the metric $\sg_P$ acquires the form: 
  
  \[
  \sg_P(x,y)= \Omega^2(x,y,P)\cdot [dx^2+dy^2]. 
  \]
  Note the function $\Omega(x,y,P)$ is ${\cal C}^1$ in all its entries, and is uniformly bounded away from zero, 
  by the construction of the isothermal coordinates, using the fact that $dx\ne 0$ on $\partial D^2_P(\e)$. 
In particular 
\beq\label{Bounds}
M^{-1}\le |\nabla x|_{\sg}+|\nabla y|_{\sg}\le M
\eeq
for some universal $M>1$. 
  Let us denote by 
  $\Delta_{\mathbb{E}^2}$ the Laplacian $\partial_{xx}+\partial_{yy}$. 
  
\newcommand{\DE}{\Delta_{\mathbb{E}^2}}  
  
  We then transition to the new coordinate system $(x_1,\dots, x_{n-2}, x,y)$. 
For convenience,   we moreover restrict the new isothermal coordinates to the disc $\{x^2+y^2\le M^{-2}\e , y\ge 0\}$. 
We denote the latter by $D^2(\eta)\subset \mathbb{R}^2_+$, where $\eta= M^{-1}\sqrt{\e}$.  
 The equation \eqref{Lapla in 2} obtains the form: 
  
   \begin{equation}\begin{split}
&-\DE u  +\Omega^2 (x,y,P)[m_1 \partial_x u+m_2\partial_y u]  =\Omega^2(x,y,P)J_P(x,y). 
\label{Lapla in 2 again}
   \end{split}\end{equation}
  Here $m_1=m_1(x_1,\dots ,x_n), m_2=m_2(x_1,\dots, x_n)$ are continuous
   functions.

 Now, we consider  the same conformal mapping we did when $n=2$  \emph{to the $x,y$ coordinates 
  only}.  In other words, 
  we  construct new coordinates $(X,Y)$ 
  out of $x,y$ as follows:

   Letting $z=(x+iy)$, we define $(X+iY)= \zeta=\sqrt{z}$ as in the 
   previous section. Thus $\zeta$ takes values over the positive  quadrant $D_+(\sqrt{\eta})$ of radius 
   $\sqrt{\eta}$ in $\mathbb{R}^2$ centered at $(0,0)$. 
We note the transformation of the volume forms 

\beq\label{vol form}
dz\wedge d\overline{z}=4|\zeta|^2d\zeta \wedge d\overline{\zeta}.  
\eeq

    We define the new function $U(\zeta)= u(\zeta^2)$. 
   We invoke  the conformal 
    covariance of the Laplacian in two dimensions to derive:
    
    \beq\label{Lapl transfn}\begin{split}
   & [\partial_{XX}+\partial_{YY}] U=\partial_\zeta\partial_{\overline{\zeta}}U=|\zeta|^2\partial_z\partial_{\overline{z}}u
   \\& =|\zeta|^2\cdot [\partial_{xx}+\partial_{yy}]u= |\zeta|^2\cdot  \Omega^{2}J_P(x,y)-|\zeta| [{m}_X \partial_X u+{m}_Y\partial_Y u]
   \end{split} \eeq
   (The functions ${m}_X=m_1\cdot\Omega^2, {m}_Y=m_2\cdot \Omega^2$ are bounded functions).  The transformation law of the the volume form and of derivatives
   under this coordinate change  imply that
   \[
   \int_{D_+(\sqrt{\eta})} |\nabla_X U|^2 +|\nabla_Y U|^2 d\zeta d\overline{\zeta}<\infty.
   \]

   Our next goal is to use suitable reflections (in analogy with the 
   2-dimensional setting) to obtain an equation over the entire punctured 
   disc of radius $\sqrt{\eta}$ 
   in $\mathbb{R}^2$. This time, this involves reflecting both $U$ and also the first order coefficients, as well as  the function 
   $N=|\zeta|^2 \cdot  \Omega^{2}\cdot J_P(x,y)$, which we recall lies in $L^p(D^2(\sqrt{\eta}))$, $\forall p<\infty$. 

We note that $U$ satisfies that $ U=U_0={\rm Const}$ on the positive $X$-axis and $\partial_XU=0$ on the positive $Y$-axis. 
In fact we replace $U$ by $U-U_0$ by slight abuse of notation. 

 We then construct an extended $\tilde{U}(X,Y)$ by first performing 
 an even reflection across the $Y$-axis. Note that the resulting function is ${\cal C}^{2}$, 
 except at the origin.  We also extend $N$ by  an 
 even reflection across the $Y$-axis,   which yields an $L^p$ function.    ${m}_X$ is extended by 
 an odd reflection and ${m}_Y$ by an even reflection. 
  The resulting functions are denoted by $\tilde{m}_X, \tilde{m}_Y$; they are continuous functions. 
 
 The resulting functions 
 $\tilde{U}, \tilde{N}, \tilde{m}_X, \tilde{m}_Y$ are defined over the half-disc 
 $D(\sqrt{\eta})\bigcap \{Y> 0\}$. It  follows that $\tilde{U}, \tilde{N}$ satisfy: 
 
 \[
 [\partial_{XX}+\partial_{YY}] \tilde{U} +|\zeta| [{m}_X \partial_X \tilde{U}+{m}_Y\partial_Y \tilde{U}]=\tilde{N}
 \]
 on that domain, in the strong sense. We then consider the new function $\tilde{U}$, which 
 now vanishes on $\{Y=0\}\bigcap D^2(\sqrt{\eta})$. We perform an odd 
 reflection of this function across the $X$-axis , and also perform an odd reflection of 
 $\tilde{N}$ across the $X$-axis,  which again yields a ${\cal C}^{1,1}$  function away from the origin. 
 We also perform  an odd reflection of $\tilde{m}_X$ and an even reflection for $\tilde{m}_Y$. 
  Denote the functions we obtain on all of 
 $D^2(\sqrt{\eta})\setminus \{(0,0)\}$ by $R$ and $\overline{K}$, $\overline{m}_X,\overline{m}_Y$.

 These functions then satisfy 
  
   \beq\label{new new elliptic}
 \DE (R)  +|\zeta| [\overline{m}_X \partial_X R+\overline{m}_Y\partial_Y R]=\overline{K},
 \eeq 
on  $D^2(\sqrt{\eta})\setminus \{(0,0)\}$,  again in the strong sense.   We note further that 
 $\nabla R\in L^2( dXdY)$) and 
  $\overline{K}\in L^p(DXDY)$, $\forall p<\infty$.  We recall that $R$ is a ${\cal C}^{1,1}$  function on the boundary $C(\sqrt{\eta})=\partial D(\sqrt{\eta})$. 
  (The function is in fact ${\cal C}^2$ except precisely on the $X$-axis)
 We can then construct a ${\cal C}^{1,1}$  extension $R^\sharp(X,Y)$ 
of $R(X,Y)|_{C(\sqrt{\eta})}$ which vanishes in an open neighborhood of the origin and moreover has the same (even or odd) 
reflection symmetries across the axes as $R(X,Y)$.  
Considering the function $\delta R(X,Y)=R(X,Y)-R^\sharp(X,Y)$ we then obtain a function with zero Dirichlet boundary data 
on $C(\sqrt{\eta})$, which solves an equation: 

    \beq\label{new new elliptic delta}
 \DE (\delta R)  +|\zeta| [\overline{m}_X \partial_X [\delta R]+\overline{m}_Y\partial_Y [\delta R]]=\overline{E}.
 \eeq 
  $\overline{E}\in L^p(dXdY)$, $\forall p<\infty$.   $\delta R$ vanishes on the boundary $C(\sqrt{\eta})$
  and so far we have obtained that it  lies in $W^{1,2}(dXdY)$.  We will show that $\delta R(X,Y)$ lies in $C^{1,\alpha}\left(D^2(\sqrt{\eta})\right)$ 
  for any $\alpha<1$.

To derive this,  consider the Dirichlet Green's function $G(X,Y)$ on the 
 disc $D^2(\sqrt{\eta})$,  and we  convolve \eqref{new new elliptic delta} against that function. 
 We note that since we have ${\cal C}^2$ regularity of 
 $\delta R(X,Y)$   
 near all points except for the origin  the convolution 
\[
B(X,Y)=G*\overline{E}
\]
is well-defined at all points except for the origin. 
Since $\overline{E}\in L^p(D^2(\sqrt{\eta}))$ $B$ lies in  $W^{2,p}(D^2(\sqrt{\eta}))$,  by the mapping properties of the Green's function of the disc.  
Sobolev embedding then implies that $B(X,Y)$  lies in $C^{1,\alpha}(D^2(\sqrt{\eta}))$ 
for any $\alpha<1$.  We also recall that since  $\delta R$ is 
smooth away from the origin, 
we derive that at all points away from the origin: 
\[
\bigg\{ [\DE \delta R]*G\bigg\}(X,Y)=\delta R (X,Y). 
\]

 We also  derive:
\beq
\sum_{|J|\le 2}\|D^J \bigg[  [|\zeta|\overline{m}_X \partial_X [\delta R]+|\zeta|\overline{m}_Y\partial_Y [\delta R]]*G  \bigg]\|_{L^p\big(D^2(\sqrt{\eta})\big)}\le  M\cdot \|\delta R \|_{W^{1,p}\left( D^2(\sqrt{\eta})\right)}
\eeq
The constant $M$ is uniform, and independent of $\eta$ (it depends 
only on the ${\cal C}^2$ norm of $\overline{m}_X, \overline{m}_Y$). 
We can then invoke the Poincare inequality (recall $\delta R$ vanishes on the positive $X$-axis) to derive: 

\beq\begin{split}
&\sum_{|J|\le 2}\|D^J \bigg[ \sqrt{X^2+Y^2}\cdot  [\overline{m}_X \partial_X [\delta R]+\overline{m}_Y\partial_Y [\delta R]]*G  \bigg]\|_{L^p(D^2(\sqrt{\eta}))}
\\&\le  M\cdot\sqrt{\eta} \|\delta R \|_{W^{2,p}\left( D^2(\sqrt{\eta})\right)}.
\end{split}\eeq

Thus, convolving \eqref{new new elliptic delta} with $G$ and taking up to two derivatives, 
taking the $L^p$ norm of the resulting equations and summing, we derive: 

\[
(1-M\sqrt{\eta}) \|\delta R  \|_{W^{2,p}(D^2(\sqrt{\eta}))}\le M\| \overline{E}\|_{L^p\big(D^2(\sqrt{\eta})\big)}<+\infty
\]
Choosing $\eta$ small enough we derive the finiteness of $ \|\delta R  \|_{W^{2,p}\big(D(\sqrt{\eta})\big)}$.
By Sobolev embedding, we derive the finiteness of $ \|\delta R  \|_{{\cal C}^{1,\alpha}\big(D(\sqrt{\eta})\big)}$, 
$\alpha<1$.

This then implies that the vector-valued function $(\partial_X \delta R, \partial_Y \delta R)$ lies in 
${\cal C}^\alpha\left( D(\sqrt{\eta})\right)$. 
 Consider $\tilde{\delta R}(x,y)= \delta R(\sqrt{z})$ (where $z=x+i\cdot y$).  Using the transformation law of the coordinates $(X,Y)$ to $(x,y)$ 
 as well as the vanishing of $\partial_X \tilde{\delta R}(X, Y=0)$ 
and  $\partial_Y \tilde{\delta R}(X=0, Y)$ 
we derive: 
\beq\label{lead and rest}
\bigg(\partial_x \tilde{\delta R}(x,y), \partial_y \tilde{\delta R}(x,y)\bigg)=\tilde{A}\cdot (\sqrt{x^2+y^2})^{-1/2}\cdot (\sin(\frac{\phi}{2}), \cos(\frac{\phi}{2}))+O((\sqrt{x^2+y^2})^{-1/2+\alpha}). 
\eeq
 
 Now, again 
  identifying $(x,y)$ 
 with $x+i\cdot y$ 
we then transform back to coordinates 
$(x_{n-1}, x_n)= T^{-1}(x+i\cdot y)$. 
Defining $b(0,\dots, 0, x_{n-1}, x_n)$  to be the second (remainder term) in \eqref{lead and rest} after this change of coordinates 
  we derive 
  \eqref{stronger claim},  for $(x_1,\dots, x_{n-2})=(0,\dots, 0)$,  where
 $b(0,\dots ,0,x_{n-1},x_n)$ is a continuous function in $x_{n-1}, x_n$.  Now,  we make note that the
 estimate we have obtained applies to \emph{every} set of points 
 $(x_1,\dots , x_{n-2})$ (choosing 
$(x_1,\dots, x_{n-2})=(0,\dots ,0)$ was just  a coordinate normalization).  This thus implies \eqref{stronger claim}, 
except we have no 
regularity for the functions $A(x_1,\dots, x_{n-2}, x_{n-1}, x_n)$,  $b(x_1,\dots, x_{n-2}, x_{n-1}, x_n)$ in the 
coordinates $x_1,\dots, x_{n-2}$. 
To derive the continuity of the two functions in these coordinates, we just return to the PDEs  
\eqref{Diffd PDE},  \eqref{Lapla in 2}, \eqref{Lapla in 2 again}, \eqref{new new elliptic}
 that were used in 
our derivation,  and note that the RHSs, the coefficients and the boundary values of the unknowns are 
${\cal C}^1$ in the coordinates $x_1,\dots, x_{n-2}$ (measured in their respective norms over $x_{n-1}, x_n)$). 
 This yields the continuity of the functions 
\[
A(x_1,\dots,x_{n-2}), b(x_1,\dots, x_n)
\] in 
$x_1,\dots, x_{n-2}$ also.

   \end{proof}
   \subsubsection*{Extension to domains with corners.}

   We conclude the paper by noting an extension of the main theorem which captures the more realistic model where
   the electrodes are smooth but stiff,  and thus the surfaces $E_i$ form a \emph{corner} with the boundary 
   $\partial \Omega$ at the edges $\partial E_i$.  To not burden the reader with excessive notation,  we present this 
   extension only in dimension $2$--the analogue of Theorem \ref{gen thm}
   would be too long to sketch.  
  In two dimensions, the setting would be of the electrode $e_i$'s being a smooth curves in $\mathbb{R}^2$,  and also 
  $\partial\Omega\setminus \bigcup_{i=1}^Ne_i$ consists of a disjoint union of closed segments.  The assumption is 
  that at each lf the two endpoints $L_i, R_i$ of $e_i$ the two smooth curves $e_i$
   and $\partial\Omega\setminus e_i$  form an angle 
  $\phi_{i,L}, \phi_{i,R}\in (0,2\pi)$. (If the angle where $\pi$, the boundary $\partial\Omega$ would be  ${\cal C}^1$
   near those points). 
   
  The generalization of our  theorem 
\ref{Them 2D} in this setting of corners  (denoting $\phi_{i,L}$, $\phi_{i,R}$ by 
  $\varphi$ for short, and also the distance ${\rm dist}(\cdot, L_i), {\rm dist}(\cdot, R_i)$ from either $L_i, R_i$ by
  ${\rm dist}$),  is that on the electrode $e_i$ upon approach 
  to either of its endpoints, the Neumann data has the expansion:
  
  \begin{equation}\label{corners claim}
 | \partial_y u|=A\cdot {\rm dist}^{\frac{\pi}{2\varphi}-1}+ 
 O({\rm dist}^{\frac{\pi}{2\varphi}-1+\alpha\cdot \frac{\pi}{\varphi}})). 
  \end{equation}
  
    Let us sketch how the proof of this reduces readily to the setting of domains $\Omega$ with ${\cal C}^1$ boundary:
  
  In dimension 2 locally near each endpoint of any electrode we 
 define a new complex coordinate $\tilde{z}$ from the 
  initial coordinate $z=x+i\cdot y$ via:  
  $\tilde{z}=z^{\pi/\varphi}$. 
  The resulting boundary then becomes ${\cal C}^1$ (in fact 
  ${\cal C}^{1,1}$), and the static potential $u$ 
 still satisfies $\Delta u=0$ (with respect to the new coordinates), and moreover $u$ remains constant on 
  the electrode $e_i$ and $\partial_y u=0$ off of the electrode. 
  We can thus 
  invoke theorem \ref{Them 2D} in the new coordinate $\tilde{z}$ and in 
  the end transform back to the original coordinate $z$.  
 Keeping track of the coordinate and vector field transformations, we derive \eqref{corners claim}.

   \subsubsection*{Acknowledgements}
   
   I am grateful to Adam Stinchcombe for raising the question addressed in this paper. I am also indebted to Petri Ola 
   for 
guiding me through literature that is pertinent to  this topic. This research was partly supported by NSERC grant RGPIN-2019-06946.

   \bibliographystyle{plain}
\bibliography{references.bib}

        \end{document}